\theoremstyle{plain}
\newtheorem{theorem}{Theorem}[section]
\newtheorem{corol}[theorem]{Corollary}
\theoremstyle{definition}
\newtheorem{definition}[theorem]{Definition}
\newtheorem{remark}[theorem]{Remark}
\begin{document}
\title[Matrices with self-interlacing spectrum]
 {Self-interlacing polynomials II:\\ Matrices with self-interlacing spectrum}
\author[M.~Tyaglov]{Mikhail Tyaglov}

\address{School of Mathematical Sciences, Shanghai Jiao Tong University\\
and  Faculty of Mathematics, Far East Federal University}
\email{tyaglov@sjtu.edu.cn}

\subjclass[2010]{Primary 15A18, 15B05; Secondary 12D10, 15B35, 15B48}


\keywords{self-interlacing
polynomials, totally nonnegative matrices, tridiagonal matrices, anti-bidiagonal matrices, oscillatory matrices}


\begin{abstract}
An $n\times n$ matrix is said to have a self-interlacing spectrum if its eigenvalues $\lambda_k$, $k=1,\ldots,n$, are distributed
as follows
$$
\lambda_1>-\lambda_2>\lambda_3>\cdots>(-1)^{n-1}\lambda_n>0.
$$
A method for constructing sign definite matrices with self-interlacing spectra from totally nonnegative ones is presented.
We apply this method to bidiagonal and tridiagonal matrices. In particular,
we generalize a result by O.\,Holtz on the spectrum of real symmetric
anti-bidiagonal matrices with positive nonzero entries.
\end{abstract}

\maketitle

\setcounter{equation}{0}

\section{Introduction}\label{section:Introduction}

In~\cite{Tyaglov_SI} there were introduced the so-called self-interlacing polynomials. A polynomial
$p(z)$ is called self-interlacing if all its roots are real, semple and interlacing the roots of the polynomial $p(-z)$.
It is easy to see that if $\lambda_k$, $k=1,\ldots,n$, are the roots of a self-interlacing polynomial, then the are distributed
as follows
\begin{equation}\label{SI_matrix.real.spectra}
\lambda_1>-\lambda_2>\lambda_3>\cdots>(-1)^{n-1}\lambda_n>0,
\end{equation}
or
\begin{equation}\label{SI_matrix.real.spectra.2}
-\lambda_1>\lambda_2>-\lambda_3>\cdots>(-1)^{n}\lambda_n>0.
\end{equation}

The polynomials whose roots are distributed as in~\eqref{SI_matrix.real.spectra} (resp. in~\eqref{SI_matrix.real.spectra.2}) are called self-interlacing
of kind $I$ (resp. of kind $II$). It is clear that a polynomial $p(z)$ is self-interlacing of kind $I$ if, and only if,
the polynomial $p(-z)$ is self-interlacing of kind $II$. Thus, it is enough to study self-interlacing of kind $I$, since
all the results for self-interlacing of kind $II$ will be obtained automatically.

\begin{definition}
An $n\times n$ matrix is said to possess a \textit{self-interlacing spectrum} if its eigenvalues $\lambda_k$, $k=1,\ldots,n$, are real, simple,
are distributed as in~\eqref{SI_matrix.real.spectra}.
\end{definition}

In~\cite{Tyaglov_SI} it was proved that a polynomial
\begin{equation}\label{main.poly}
p(z)=a_0z^n+a_1z^{n-1}+a_2z^{n-2}+a_3z^{n-3}+a_4z^{n-4}+\cdots+a_n=\sum\limits_{k=0}^na_kz^{n-k}
\end{equation}
is self-interlacing of kind $I$ if, and only if, the polynomial
\begin{equation}\label{main.poly.2}
p(z)=a_0z^n-a_1z^{n-1}-a_2z^{n-2}+a_3z^{n-3}+a_4z^{n-4}+\cdots+(-1)^{\tfrac{n(n+1)}{2}}a_n=\sum\limits_{k=0}^n(-1)^{\tfrac{k(k+1)}{2}}a_kz^{n-k}
\end{equation}
is Hurwitz stable, that is, has all its roots in the open left half-plane of the complex plane. Thus, there is a one-to-one correspondence between
the self-interlacing polynomials of kind $I$ and the Hurwitz stable polynomials. Now since the set of all polynomials with \textit{positive} roots
is isomorphic to a subset of the set of Hurwitz stable polynomials (to the set of all polynomials with negative roots), we conclude that this set
is isomorphic to a subset of the set of all self-interlacing polynomials of kind $I$. Consequently, it is worth to relate some classes of positive
definite and totally nonnegative matrices to matrices with self-interlacing spectrum.

In this work, we consider some classes of
\textit{real} matrices with self-interlacing spectrum and develop
a method of constructing such kind of matrices from a given
totally positive matrix. Namely, we show how and under what conditions 
it is possible to relate a totally nonnegative matrix with a matrix with
self-interlacing spectrum (Theorem~\ref{teorem.oscill_signreg.matrix.1}).
We apply this theorem to totally nonnegative bidiagonal and tridiagonal
matrices (generalizing a result by O.~Holtz) and explain how our technique 
can be extended for other classes of structured matrices. A part of this 
work was appeared first in the technical report~\cite{Tyaglov_GHP}.

\setcounter{equation}{0}

\section{Matrices with self-interlacing
spectrum}\label{section:matrices.SI}

At first, we recall some definitions
and statements from the book~\cite[Chapter~V]{KreinGantmaher}.

\begin{definition}[\cite{KreinGantmaher}]\label{def.sign definite.matrix}
A square matrix $A=\|a_{ij}\|_1^n$ is called \textit{sign
definite} of class~$n$ if for any $k\leqslant n$, all the non-zero
minors of order $k$ have the same sign $\varepsilon_k$. The
sequence $\{\varepsilon_1,\varepsilon_2,\ldots,\varepsilon_n\}$ is
called the \textit{signature sequence} of the matrix $A$.

A sign definite matrix of class $n$ is called \textit{strictly
sign definite} of class $n$ if all its minors are different from
zero.
\end{definition}
\begin{definition}[\cite{KreinGantmaher}]
A square sign definite matrix $A=\|a_{ij}\|_1^n$ of class $n$ is
called the \textit{matrix of class}~$n^{+}$ if some its power is a
strictly sign definite matrix of class $n$.
\end{definition}

Note that a sign definite (strictly sign definite) matrix of class
$n$ with the signature sequence
$\varepsilon_1=\varepsilon_2=\ldots=\varepsilon_n=1$ is totally
nonnegative (\textit{strictly totally positive}). Also a sign
definite matrix of class~$n^{+}$ with the signature sequence of
the form $\varepsilon_1=\varepsilon_2=\ldots=\varepsilon_n=1$ is
an \textit{oscillatory} (oscillation) matrix (see~\cite{KreinGantmaher}), that
is, a totally nonnegative matrix whose certain power is strictly
totally positive. It is clear from the Binet-Cauchy formula~\cite{Gantmakher.1} that
the square of a sign definite matrix is totally nonnegative.

In~\cite[Chapter~V]{KreinGantmaher} it was established the following theorem.
\begin{theorem}\label{Thm.SI_spectra.matrix}
Let the matrix $A=\|a_{ij}\|_1^n$ be totally nonnegative. Then the
matrices $B=\|a_{n-i+1,j}\|_1^n$ and $C=\|a_{i,n-j+1}\|_1^n$ are
sign definite of class $n$. Moreover, the signature sequence of
the matrices $B$ and~$C$ is as follows:
\begin{equation}\label{SI_matrix.eps}
\varepsilon_k=(-1)^{\tfrac{k(k-1)}2},\quad k=1,2,\ldots,n.
\end{equation}
\end{theorem}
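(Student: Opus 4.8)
The plan is to reduce everything to the total nonnegativity of $A$ by a direct computation of minors, exploiting the fact that $B$ and $C$ are obtained from $A$ by reversing, respectively, the order of the rows and the order of the columns. Introduce the reversal (anti-identity) matrix $J=\|\delta_{i,n-j+1}\|_1^n$, which has ones on the anti-diagonal and zeros elsewhere. By construction $B_{ij}=a_{n-i+1,j}=(JA)_{ij}$ and $C_{ij}=a_{i,n-j+1}=(AJ)_{ij}$, so $B=JA$ and $C=AJ$; passing from $A$ to $B$ (resp.\ to $C$) amounts to applying the order-reversing permutation to the row (resp.\ column) indices.

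First I would fix an order $k$ and arbitrary index sets $1\leqslant i_1<\cdots<i_k\leqslant n$ and $1\leqslant j_1<\cdots<j_k\leqslant n$ and write out the corresponding minor of $B$. Since the $i$-th row of $B$ is the $(n-i+1)$-th row of $A$, the chosen submatrix of $B$ has rows carrying the $A$-indices $n-i_1+1>n-i_2+1>\cdots>n-i_k+1$, which appear in \emph{decreasing} order. Rearranging them increasingly is exactly the reversal of $k$ elements, a permutation with $\binom{k}{2}=k(k-1)/2$ inversions and hence sign $(-1)^{k(k-1)/2}$. This yields the key identity
\begin{equation*}
B\begin{pmatrix} i_1 & \cdots & i_k \\ j_1 & \cdots & j_k \end{pmatrix}
=(-1)^{\frac{k(k-1)}2}\,
A\begin{pmatrix} n-i_k+1 & \cdots & n-i_1+1 \\ j_1 & \cdots & j_k \end{pmatrix}.
\end{equation*}
The analogous computation for $C=AJ$, now reversing the column indices, gives
\begin{equation*}
C\begin{pmatrix} i_1 & \cdots & i_k \\ j_1 & \cdots & j_k \end{pmatrix}
=(-1)^{\frac{k(k-1)}2}\,
A\begin{pmatrix} i_1 & \cdots & i_k \\ n-j_k+1 & \cdots & n-j_1+1 \end{pmatrix}.
\end{equation*}

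With these identities in hand the conclusion is immediate. Because $A$ is totally nonnegative, every minor of $A$ on the right-hand sides is nonnegative; consequently every order-$k$ minor of $B$ (and of $C$) is either zero or has sign exactly $(-1)^{k(k-1)/2}$. Since the index sets were arbitrary, all nonzero minors of a fixed order $k$ share the common sign $\varepsilon_k=(-1)^{k(k-1)/2}$, which is precisely the assertion that $B$ and $C$ are sign definite of class $n$ with the signature sequence~\eqref{SI_matrix.eps}.

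The only genuinely delicate point is the bookkeeping of the permutation sign: one must verify that the order-reversing permutation on $k$ letters has sign $(-1)^{k(k-1)/2}$, and that reversing the selected rows of the $k\times k$ submatrix (rather than relabelling the whole matrix) introduces no further sign. Everything else is routine multilinearity bookkeeping for minors, so I expect no real obstacle beyond computing this sign correctly once and applying it consistently to both $B$ and $C$.
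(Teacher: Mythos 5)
Your proof is correct, and it is the standard argument: the paper itself does not prove this theorem but cites Gantmacher--Krein, and its surrounding discussion (writing $B=JA$, $C=AJ$ with $J$ sign definite of signature $(-1)^{k(k-1)/2}$ and invoking Binet--Cauchy) amounts to exactly the same computation you carry out explicitly, since the only nonzero order-$k$ minors of $J$ equal $(-1)^{k(k-1)/2}$ and select the reversed row (resp.\ column) set. Your sign bookkeeping for the reversal permutation on $k$ letters is right, so there is nothing to add.
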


Note that the matrices $B$ and $C$ can be represented as follows
\begin{equation*}
B=JA,\qquad\text{and}\qquad C=AJ,
\end{equation*}
where
\begin{equation}\label{Matrice.J}
J=
\begin{pmatrix}
    0 & 0 &\dots& 0 & 1 \\
    0 & 0 &\dots& 1 & 0 \\
    \vdots&\vdots&\cdot&\vdots&\vdots\\
    0 & 1 &\dots& 0 & 0 \\
    1 & 0 &\dots& 0 & 0 \\
\end{pmatrix}.
\end{equation}

It is easy to see that the matrix $J$ is sign definite of class
$n$ (but not of class $n^{+}$) with the signature sequence of the
form~\eqref{SI_matrix.eps}. So by the Binet-Cauchy
formula we obtain the following statement.
\begin{theorem}\label{teorem.SI_oscill.spectra.matrix}
The matrix $A=\|a_{ij}\|_1^n$ is totally nonnegative if and only
if the matrix $JA$ (or the matrix~$AJ$) is sign definite of class
$n$ with the signature sequence~\eqref{SI_matrix.eps} where the matrix
$J$ is defined in~\eqref{Matrice.J}.
\end{theorem}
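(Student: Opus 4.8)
The plan is to use the Binet--Cauchy formula to compute the minors of $JA$ (and of $AJ$) in terms of the minors of $A$, exploiting the very simple structure of the permutation matrix $J$. Since $J$ is a fixed sign definite matrix with signature sequence~\eqref{SI_matrix.eps}, its effect on minors is completely transparent: multiplying $A$ on the left by $J$ simply reverses the order of the rows of $A$. First I would recall the Binet--Cauchy identity for a minor of a product $JA$ of order $k$ with row index set $P$ and column index set $Q$,
\begin{equation*}
(JA)\begin{pmatrix}P\\Q\end{pmatrix}=\sum_{R}J\begin{pmatrix}P\\R\end{pmatrix}A\begin{pmatrix}R\\Q\end{pmatrix},
\end{equation*}
where the sum runs over all increasing $k$-tuples $R$. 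Because each row of $J$ has a single nonzero entry (a $1$ in the anti-diagonal position), for a fixed $P$ there is exactly one index set $R$ for which $J\binom{P}{R}\neq0$, namely $R$ consisting of the complementary positions $n+1-p$ for $p\in P$, reindexed in increasing order. Hence the sum collapses to a single term.

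The key computation is then the sign of that unique nonvanishing minor $J\binom{P}{R}$, which is $\pm1$. This sign equals $(-1)^{\sigma}$, where $\sigma$ is the number of inversions introduced by reversing the selected rows, i.e. the parity of the permutation sending $(p_1<\cdots<p_k)$ to the reversed-and-reindexed tuple. A direct count shows $\sigma=\binom{k}{2}=\tfrac{k(k-1)}2$ regardless of which $k$ rows are chosen, so every nonzero minor of $J$ of order $k$ carries the sign $\varepsilon_k=(-1)^{k(k-1)/2}$ of~\eqref{SI_matrix.eps}; this is precisely the assertion that $J$ is sign definite of class $n$ with that signature. I would verify this inversion count as the one genuinely computational step, most cleanly by induction on $k$ or by noting that reversing $k$ consecutive-in-selection indices contributes exactly $\binom{k}{2}$ transpositions.

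Combining the two observations gives, for every order $k$ and every $P,Q$,
\begin{equation*}
(JA)\begin{pmatrix}P\\Q\end{pmatrix}=(-1)^{\tfrac{k(k-1)}2}\,A\begin{pmatrix}R\\Q\end{pmatrix},
\end{equation*}
so the minors of $JA$ are, up to the fixed sign $\varepsilon_k$, exactly the minors of $A$. The forward direction follows immediately: if $A$ is totally nonnegative then $A\binom{R}{Q}\geqslant0$ for all $R,Q$, whence every order-$k$ minor of $JA$ has sign $\varepsilon_k$ (or is zero), i.e. $JA$ is sign definite of class $n$ with signature~\eqref{SI_matrix.eps}; this is exactly Theorem~\ref{Thm.SI_spectra.matrix}. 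For the converse I would run the same identity backwards: since $R\mapsto P$ is a bijection on increasing $k$-tuples, as $P$ ranges over all row index sets so does $R$, and the relation shows $A\binom{R}{Q}=\varepsilon_k\,(JA)\binom{P}{Q}=\varepsilon_k^2\,(\text{nonneg})\geqslant0$, so all minors of $A$ are nonnegative and $A$ is totally nonnegative. The argument for $AJ$ is identical with the roles of rows and columns interchanged (right multiplication by $J$ reverses columns).

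The only real obstacle is bookkeeping: pinning down the unique index set $R$ and confirming that the reversal sign is the \emph{same} $\varepsilon_k$ for every choice of $k$ rows, independent of $P$. Once that uniformity is established the equivalence is a one-line consequence of Binet--Cauchy, and the ``only if'' and ``if'' directions are genuinely symmetric because squaring $\varepsilon_k$ removes the sign. I expect no difficulty beyond this inversion count, since the collapse of the Binet--Cauchy sum to a single term is forced by the sparsity of $J$.
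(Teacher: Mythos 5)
Your proposal is correct and follows essentially the same route as the paper, which likewise obtains the statement by applying the Binet--Cauchy formula to the product of $A$ with the sign definite matrix $J$ whose signature sequence is~\eqref{SI_matrix.eps}. Your observation that the Binet--Cauchy sum collapses to a single term (so that each minor of $JA$ \emph{equals} $\varepsilon_k$ times a minor of $A$, giving both implications at once) is a nice explicit refinement of the argument the paper leaves implicit, but it is not a different method.
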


Obviously, the converse statement is also true.
\begin{theorem}
The matrix $A=\|a_{ij}\|_1^n$ is a sign definite matrix of class
$n$ with the signature sequence~\eqref{SI_matrix.eps} if and only
if the matrix $JA$ (or the matrix~$AJ$) is totally nonnegative.
\end{theorem}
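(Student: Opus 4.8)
The plan is to reduce this to the previous theorem by a multiplicative/involutive argument, since the statement is simply the converse direction of Theorem~\ref{teorem.SI_oscill.spectra.matrix} read backwards. The key observation is that $J$ is an involution: from~\eqref{Matrice.J} one checks directly that $J^2=I$. Therefore the map $A\mapsto JA$ is its own inverse, and similarly $A\mapsto AJ$ satisfies $(AJ)J=A$. This is exactly the structural fact that lets us invert the correspondence established in the preceding theorem.

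**First I would** record the involution $J^2=I$ explicitly. Then, to prove the forward implication, suppose $A$ is sign definite of class $n$ with signature sequence~\eqref{SI_matrix.eps}. Set $\tilde A=JA$ (resp. $\tilde A=AJ$). Applying the already-proved Theorem~\ref{teorem.SI_oscill.spectra.matrix} to $\tilde A$, we learn that $J\tilde A=J(JA)=(J^2)A=A$ is totally nonnegative precisely when $\tilde A$ is sign definite of class $n$ with signature~\eqref{SI_matrix.eps} --- but that sign-definiteness of $\tilde A$ is \emph{equivalent} to total nonnegativity of $A$ by the same theorem, so one must orient the logic carefully. A cleaner route is to invoke the Binet--Cauchy formula~\cite{Gantmakher.1} directly, exactly as in the proof of Theorem~\ref{teorem.SI_oscill.spectra.matrix}: the minors of $JA$ are sign-weighted minors of $A$ with weights coming from the minors of $J$, which are nonzero only along the anti-diagonal selection and carry the sign $\varepsilon_k=(-1)^{k(k-1)/2}$ from~\eqref{SI_matrix.eps}.

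**The main computation** is therefore the Binet--Cauchy bookkeeping. For an index set of size $k$, a minor of $JA$ equals a single product of a minor of $J$ with a minor of $A$, because $J$ has exactly one nonzero $k\times k$ minor for each choice of $k$ rows (the one reversing the row order), and that minor equals $\varepsilon_k$. Hence every nonzero $k$th-order minor of $JA$ is $\varepsilon_k$ times the corresponding nonzero minor of $A$. This shows: the minors of $A$ all have one sign (total nonnegativity) \emph{if and only if} the minors of $JA$ all have sign $\varepsilon_k$ in order $k$ (sign definiteness with signature~\eqref{SI_matrix.eps}). The argument for $AJ$ is symmetric, reversing columns instead of rows.

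**The hard part will be** verifying the precise sign accounting in the Binet--Cauchy step --- namely that the permutation reversing $k$ indices contributes exactly $(-1)^{k(k-1)/2}$ and that no sign cancellations or multiple surviving terms arise. Since $J$ is the flip matrix, its only nonvanishing minors correspond to complementary (reversed) index sets, so no summation survives and the sign is pinned down by the length of the reversing permutation, which is $\binom{k}{2}=k(k-1)/2$ inversions. Once this is checked, both implications follow immediately, and the equivalence is established. \eop
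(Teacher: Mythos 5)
Your proof is correct and matches the paper's (implicit) argument: the paper simply declares this converse "obvious," the intended reasoning being exactly your observation that $J^2=I$ lets one substitute $JA$ for $A$ in Theorem~\ref{teorem.SI_oscill.spectra.matrix}, with the Binet--Cauchy sign bookkeeping ($\varepsilon_k=(-1)^{k(k-1)/2}$ from the single surviving reversed-index minor of $J$) as the underlying mechanism. Your worry in the middle paragraph about "orienting the logic" is unnecessary --- the biconditional of Theorem~\ref{teorem.SI_oscill.spectra.matrix} applied to the matrix $JA$ in place of $A$ yields the statement directly, with no circularity.
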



In the sequel, we need the following two theorems established in
the book~\cite[Chapter~V]{KreinGantmaher}.
\begin{theorem}\label{teorem.signreg.matrix}
Let the matrix $A=\|a_{ij}\|_1^n$ be a sign definite of class
$n^{+}$ with the signature sequence $\varepsilon_k$,
$k=1,2,\ldots,n$. Then all the eigenvalues $\lambda_k$,
$k=1,2,\ldots,n$, of the matrix $A$ are nonzero real and simple,
and if
\begin{equation}\label{SI_matrix.modules}
|\lambda_1|>|\lambda_2|>\ldots>|\lambda_n|>0,
\end{equation}
then
\begin{equation}\label{SI_matrix.signs.eigvals}
\emph{sign}\,\lambda_k=\dfrac{\varepsilon_k}{\varepsilon_{k-1}},\qquad
k=1,2,\ldots,n,\,\varepsilon_0=1.
\end{equation}
\end{theorem}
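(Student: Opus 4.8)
The plan is to pass to the compound (associated) matrices of $A$ and apply the Perron--Frobenius theorem. For $k=1,\dots,n$ let $C_k(A)$ denote the $k$th compound matrix of $A$, i.e.\ the $\binom{n}{k}\times\binom{n}{k}$ matrix whose entries are the minors of order $k$ of $A$, listed in lexicographic order of their index sets. By the Binet--Cauchy formula the compound is multiplicative, $C_k(A^m)=C_k(A)^m$, and by Kronecker's theorem the eigenvalues of $C_k(A)$ are exactly the products $\lambda_{i_1}\cdots\lambda_{i_k}$ over all $i_1<\cdots<i_k$. Since $A$ is sign definite of class $n$, every entry of $C_k(A)$ is a minor of sign $\varepsilon_k$ or zero, so $M_k\eqbd\varepsilon_k C_k(A)$ is entrywise nonnegative. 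Here I would use that $A$ is of class $n^{+}$: choosing $p$ with $A^p$ strictly sign definite, all order-$k$ minors of $A^p$ are nonzero, hence $M_k^p=\varepsilon_k^p C_k(A)^p=\varepsilon_k^p C_k(A^p)$ has no zero entry; being also nonnegative, as a power of the nonnegative matrix $M_k$, it is strictly positive. Thus every $M_k$ is primitive.

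Then I would run a single induction on $k$ producing simultaneously the reality, the strict ordering of moduli, and the signs. Applying Perron--Frobenius to the primitive nonnegative matrix $M_k$, its spectral radius $\rho_k$ is a positive, simple, strictly dominant eigenvalue. On the other hand $\rho_k$ equals the maximal modulus among the eigenvalues of $M_k$, that is $\varepsilon_k$ times a product $\lambda_{i_1}\cdots\lambda_{i_k}$ of largest modulus; after relabelling the $\lambda$'s in order of decreasing modulus this modulus is $\prod_{j=1}^{k}|\lambda_j|$. Strict dominance forces the maximal modulus to be attained by a unique product, and together with the inductive hypothesis that $\lambda_1,\dots,\lambda_{k-1}$ are already real and strictly larger in modulus than all remaining eigenvalues, this pins down a single eigenvalue $\lambda_k$ as the unique one of $k$th largest modulus. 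Because $A$ is real, any non-real eigenvalue is accompanied by its conjugate of equal modulus; uniqueness rules this out, so $\lambda_k\in\mathbb{R}$ and $|\lambda_{k-1}|>|\lambda_k|>|\lambda_j|$ for $j>k$.

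Having reality of $\lambda_1,\dots,\lambda_k$, the identity $\rho_k=\varepsilon_k\lambda_1\cdots\lambda_k>0$ gives $\sgn(\lambda_1\cdots\lambda_k)=\varepsilon_k$ for every $k$. Taking ratios of consecutive relations and setting $\varepsilon_0=1$ yields $\sgn\lambda_k=\varepsilon_k/\varepsilon_{k-1}$, which is~\eqref{SI_matrix.signs.eigvals}; the case $k=n$ also records $\det A=\lambda_1\cdots\lambda_n\neq0$, so no eigenvalue vanishes. Equivalently, one may shortcut the reality step by first observing that $A^2$ is oscillatory: each order-$k$ minor of $A^2$ is a sum of products of two order-$k$ minors of $A$, each of sign $\varepsilon_k^2=+1$, so $A^2$ is totally nonnegative, while $A^{2p}=(A^p)^2$ has strictly positive minors; then $\mu_1>\cdots>\mu_n>0$ for $A^2$ by the classical oscillatory spectral theorem, and since $A$ preserves each one-dimensional $\mu_k$-eigenspace one gets $\lambda_k=\pm\sqrt{\mu_k}$ real with distinct moduli. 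This is not circular, as it applies the all-$(+1)$ case to $A^2$ in order to treat the general signature of $A$.

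The main obstacle is the reality and ordering step, not the extraction of signs. The Perron eigenvalue of $M_k$ is automatically real and positive, but to identify it with $\varepsilon_k\lambda_1\cdots\lambda_k$ one must know that the $k$ eigenvalues of largest modulus are unambiguous, which is precisely what the strict separation $|\lambda_1|>\cdots>|\lambda_n|$ supplies. Hence the induction must carry this strict modulus separation forward at every step and convert ``unique maximal modulus'' into ``real'' via the conjugate-pairing of a real matrix; once this interlocking is in place, the division by $\varepsilon_{k-1}$ and the sign formula are immediate.
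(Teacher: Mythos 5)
Your proof is correct, but note that the paper does not prove this theorem at all: it is imported verbatim from Gantmacher and Krein's book (Chapter~V of the cited \emph{Oscillation matrices and kernels}), alongside Theorem~\ref{teorem.oscill.matrix}, as a known result. Your compound-matrix/Perron--Frobenius induction (primitivity of $\varepsilon_k C_k(A)$, Kronecker's theorem on the spectrum of compounds, and the strict modulus separation carried forward to force reality via conjugate pairing) is essentially the classical argument from that source, so you have in effect reconstructed the cited proof rather than found a different route.
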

\begin{theorem}\label{teorem.oscill.matrix}
A totally nonnegative matrix $A=\|a_{ij}\|_1^n$ is oscillatory if,
and only if, $A$ is nonsingular, and the following inequalities hold
\begin{equation*}
a_{j,j+1}>0,\qquad\text{and}\qquad a_{j+1,j}>0,\qquad
j=1,2,\ldots,n-1.
\end{equation*}
\end{theorem}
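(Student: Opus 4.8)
The plan is to prove the two implications separately, using throughout the combinatorial reading of the entries and minors of a power of $A$ furnished by the Binet--Cauchy formula. For a $p$-element index set $\alpha=(\alpha_1<\cdots<\alpha_p)$ and a similar set $\beta$, write $A\binom{\alpha}{\beta}$ for the corresponding minor of $A$. Since $A$ is totally nonnegative, all its minors are $\geqslant 0$, and by Binet--Cauchy
$$A^m\binom{\alpha}{\beta}=\sum_{\gamma^{(1)},\ldots,\gamma^{(m-1)}}A\binom{\alpha}{\gamma^{(1)}}\,A\binom{\gamma^{(1)}}{\gamma^{(2)}}\cdots A\binom{\gamma^{(m-1)}}{\beta},$$
the sum ranging over all chains of $p$-element index sets; every summand is nonnegative, so this minor is positive precisely when at least one chain contributes a positive product. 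I shall also use the standard fact that a nonsingular totally nonnegative matrix has all of its principal minors positive; in particular $a_{ii}>0$ for every $i$.

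\emph{Necessity.} Suppose $A$ is oscillatory, so $A^m$ is strictly totally positive for some $m$. Then $(\det A)^m=\det(A^m)>0$, whence $A$ is nonsingular. Assume, for contradiction, that $a_{p,p+1}=0$ for some $p$. By the propagation of zeros in a totally nonnegative matrix, the vanishing of this superdiagonal entry forces $a_{ik}=0$ for all $i\leqslant p<k$; thus $A$ is block lower-triangular with a zero block occupying rows $1,\ldots,p$ and columns $p+1,\ldots,n$. This block structure is inherited by every power of $A$, so $(A^m)_{1n}=0$ for all $m$, contradicting the strict positivity of the entries of $A^m$. The symmetric argument (applied to $A^{\mathsf T}$) excludes $a_{p+1,p}=0$. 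Hence both families of inequalities are necessary.

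\emph{Sufficiency.} This is the substantive direction. Assume $A$ is nonsingular and totally nonnegative with $a_{i,i+1}>0$ and $a_{i+1,i}>0$ for $i=1,\ldots,n-1$; I must exhibit a single exponent $m$ for which every minor of $A^m$ is positive. The engine is a building-block lemma: for any $p$-element index set $\gamma$ the principal minor $A\binom{\gamma}{\gamma}$ is positive, and if $\gamma'$ is obtained from $\gamma$ by replacing a single index by that index $\pm1$ (so that $\gamma'$ is again strictly increasing), then $A\binom{\gamma}{\gamma'}>0$ and $A\binom{\gamma'}{\gamma}>0$. Along the diagonal of such a minor every entry is either a positive diagonal entry $a_{jj}$ or one of the positive off-diagonal entries $a_{j,j+1},\,a_{j+1,j}$; combining this with the positivity of all principal minors (via Sylvester-type determinantal identities, equivalently the planar-network interpretation of totally nonnegative matrices) yields the strict positivity of the shifted minor. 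Granting the lemma, I would argue that any strictly increasing $p$-tuple can be transformed into any other by a sequence of such unit shifts of length at most $n-1$, the principal (no-shift) step being used to pad shorter sequences to a common length; feeding this chain into the Binet--Cauchy expansion above produces a positive product, so every minor of $A^{\,n-1}$ is positive and $A^{\,n-1}$ is strictly totally positive.

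The main obstacle is precisely this sufficiency argument, on two counts. First, the building-block lemma must be proved with full rigour, controlling all terms of the minor and not merely its diagonal product; here both hypotheses---nonsingularity (to force the principal minors strictly positive) and off-diagonal positivity---are genuinely needed, each alone being insufficient, as the identity matrix and a singular example already show. Second, and more delicately, one must produce a \emph{single} exponent serving all minors of all orders simultaneously: this requires a uniform bound (namely $n-1$) on the number of unit shifts needed to connect arbitrary index sets while keeping every intermediate minor positive. Once this uniform connectivity estimate is established, the oscillatory property follows at once from the Binet--Cauchy formula.
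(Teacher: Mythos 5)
First, a point of reference: the paper does not prove this theorem at all --- it is quoted verbatim from \cite[Chapter~V]{KreinGantmaher} --- so there is no in-paper proof to compare with; your sketch has to be measured against the classical Gantmacher--Krein argument, and in outline it follows that argument faithfully. Your necessity half is essentially complete: nonsingularity from $\det(A^m)=(\det A)^m>0$, and the block lower-triangular structure forced by $a_{p,p+1}=0$. The ``propagation of zeros'' you invoke is asserted rather than derived, but it follows routinely from $2\times2$ minors together with $a_{ii}>0$ (which you correctly secured from nonsingularity plus total nonnegativity), so I regard that half as sound.

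The sufficiency half, however, has a genuine gap on two counts, and you partly acknowledge this yourself. First, the building-block lemma --- that the minor with rows $\gamma$ and columns $\gamma'$ is positive when $\gamma'$ is a unit shift of $\gamma$ --- is the entire content of this direction, and it is not proved: observing that the diagonal entries of that minor are positive establishes nothing, since a totally nonnegative matrix with positive diagonal can still have vanishing non-principal minors (the identity matrix plus a single positive off-diagonal entry already shows the diagonal product controls nothing). The classical proof establishes the stronger statement that \emph{every} minor with row indices $\alpha$ and column indices $\beta$ satisfying $|\alpha_k-\beta_k|\leqslant1$ for all $k$ is positive, by induction on the order of the minor using the positivity of all principal minors and a determinantal identity; this is where all the real work lies, and it is missing from your sketch. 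Second, your uniform bound is false as stated: with single-index unit shifts, connecting $\{1,\ldots,p\}$ to $\{n-p+1,\ldots,n\}$ requires $p(n-p)$ steps, which exceeds $n-1$ already for $n=4$, $p=2$; so your chains do not reach all minors of $A^{n-1}$. The repair is exactly the stronger lemma above: if all indices are allowed to move by at most one simultaneously, any two $p$-element index sets are connected in at most $n-1$ steps, and one recovers the sharp conclusion that $A^{n-1}$ is strictly totally positive. As written, your architecture would yield ``some power of $A$ is strictly totally positive'' (hence oscillatory) once the building-block lemma is supplied with a larger exponent, so the strategy is the right one, but the central lemma is only gestured at and the quantitative claim needs correction.
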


Now we are in a position to complement
Theorem~\ref{Thm.SI_spectra.matrix}.
\begin{theorem}\label{teorem.oscill_signreg.matrix.1}
Let all the entries of a nonsingular matrix $A=\|a_{ij}\|_1^n$ be
nonnegative, and let for each $i$,
$i=1,2,\ldots,n-1$, there exist numbers $r_1$ and $r_2$,
$1\leqslant r_1,r_2\leqslant n$, such that
\begin{equation}\label{matrix.entries.1}
a_{n-i,r_1}\cdot a_{n+1-r_1,i}>0,\qquad a_{n+1-i,r_2}\cdot
a_{n+1-r_2,i+1}>0
\end{equation}
%
%
\begin{equation}\label{matrix.entries.2}
(\text{or}\qquad a_{i,n+1-r_1}\cdot a_{r_1,n-i}>0,\qquad
a_{i+1,n+1-r_2}\cdot a_{r_2,n+1-i}>0).
\end{equation}
The matrix $A$ is totally nonnegative if and only if the matrix
$B=JA=\|a_{n-i+1,j}\|_1^n$ \emph{(}or, respectively, the matrix
$C=AJ=\|a_{i,n-j+1}\|_1^n$\emph{)} is sign definite of class~$n^{+}$ with the
signature sequence defined in~\eqref{SI_matrix.eps}. Moreover, the
matrix~$B$ \emph{(}or, respectively, the matrix~$C$\emph{)}
possesses a self-interlacing spectrum.
\end{theorem}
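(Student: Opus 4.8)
The plan is to reduce everything to the already-established correspondence between total nonnegativity and sign definiteness of class $n$, combined with the oscillatory criterion of Theorem~\ref{teorem.oscill.matrix}. By Theorem~\ref{teorem.SI_oscill.spectra.matrix}, the matrix $A$ is totally nonnegative if and only if $B=JA$ is sign definite of class $n$ with the signature sequence~\eqref{SI_matrix.eps}, and since $J^2=I$ one also has $A=JB$. Thus the ``class $n$'' half of the biconditional is free in both directions, and the entire new content reduces to a single point: under the hypotheses~\eqref{matrix.entries.1}, the sign definite matrix $B$ of class $n$ is in fact of class $n^{+}$.

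To establish this I would pass to $B^2$. Since $B$ is sign definite of class $n$, the Binet--Cauchy formula shows $B^2$ is totally nonnegative, and since $\det B=\pm\det A\neq 0$ it is nonsingular. The key reduction is that $B$ is of class $n^{+}$ if and only if $B^2$ is oscillatory: if $B^{p}$ is strictly sign definite of class $n$, then $B^{2p}=(B^{p})^2$ is strictly totally positive (the square of a strictly sign definite matrix is strictly totally positive by Binet--Cauchy), so $(B^2)^{p}$ is strictly totally positive and $B^2$ is oscillatory; conversely, if $(B^2)^{q}=B^{2q}$ is strictly totally positive, then $B^{2q}$ is strictly sign definite of class $n$, and since $B$ is already sign definite of class $n$ it is of class $n^{+}$. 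Hence it suffices to prove $B^2$ is oscillatory.

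Here the hypotheses enter, and this is the crux of the argument. Writing $b_{ij}=a_{n-i+1,j}$, one computes the off-diagonal entries
\[
(B^2)_{i+1,i}=\sum_{j=1}^{n}a_{n-i,j}\,a_{n+1-j,i},\qquad
(B^2)_{i,i+1}=\sum_{j=1}^{n}a_{n+1-i,j}\,a_{n+1-j,i+1}.
\]
Because all entries of $A$ are nonnegative, every summand is nonnegative, so each of these entries is positive as soon as a single summand is. Choosing $j=r_1$ in the first sum and $j=r_2$ in the second, the two inequalities in~\eqref{matrix.entries.1} are \emph{exactly} the statements $(B^2)_{i+1,i}>0$ and $(B^2)_{i,i+1}>0$ for $i=1,\dots,n-1$. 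Thus $B^2$ is totally nonnegative, nonsingular, and has strictly positive sub- and superdiagonal entries, so by Theorem~\ref{teorem.oscill.matrix} it is oscillatory, whence $B$ is of class $n^{+}$. Running this backwards, if $B$ is of class $n^{+}$ it is in particular sign definite of class $n$, so $A=JB$ is totally nonnegative by Theorem~\ref{teorem.SI_oscill.spectra.matrix}; this closes the equivalence.

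For the \emph{moreover} part I would apply Theorem~\ref{teorem.signreg.matrix} to $B$: its eigenvalues are real, simple, and nonzero, and ordering them as in~\eqref{SI_matrix.modules} gives $\sgn\lambda_k=\varepsilon_k/\varepsilon_{k-1}$. With $\varepsilon_k=(-1)^{k(k-1)/2}$ a direct computation yields $\varepsilon_k/\varepsilon_{k-1}=(-1)^{k-1}$, so $(-1)^{k-1}\lambda_k=|\lambda_k|$ for every $k$; combined with $|\lambda_1|>|\lambda_2|>\cdots>|\lambda_n|>0$ this is precisely the chain~\eqref{SI_matrix.real.spectra}, i.e. $B$ has a self-interlacing spectrum. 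The case of $C=AJ$ under~\eqref{matrix.entries.2} is entirely symmetric, with $(C^2)_{i,i+1}$ and $(C^2)_{i+1,i}$ replacing the entries above and $A=CJ$. I expect the only real obstacle to be the index bookkeeping that makes $j=r_1,r_2$ land on the prescribed entries; once the identity between~\eqref{matrix.entries.1} and the positivity of the off-diagonal entries of $B^2$ is recognized, the rest is assembly of the quoted theorems.
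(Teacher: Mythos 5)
Your proposal is correct and follows essentially the same route as the paper's own proof: pass to $B^2$, observe that the hypotheses~\eqref{matrix.entries.1} are exactly the positivity of $(B^2)_{i,i+1}$ and $(B^2)_{i+1,i}$, invoke Theorem~\ref{teorem.oscill.matrix} to get that $B^2$ is oscillatory and hence $B$ is of class $n^{+}$, and then read off the spectrum from Theorem~\ref{teorem.signreg.matrix}. Your spelled-out justification that $B^2$ oscillatory implies $B$ of class $n^{+}$ is a slightly more explicit version of a step the paper states briefly, not a different argument.
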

\begin{proof}
We prove the theorem in the case when the
condition~\eqref{matrix.entries.1} holds. The case of the
condition~\eqref{matrix.entries.2} can be established analogously.

Let $A$ be a nonsingular totally nonnegative matrix, and let the
condition~\eqref{matrix.entries.1} hold. From
Theorem~\ref{teorem.SI_oscill.spectra.matrix} it follows that the
matrix $B=JA$ is sign definite of class~$n$ with the signature
sequence~\eqref{SI_matrix.eps}. In order to the matrix to be sign
definite of class~$n^{+}$ it is necessary and sufficient that a
certain power of this matrix $B$ be strictly sign definite of
class~$n$. Since the entries of the matrix $J$ have the form
\begin{equation*}
(J)_{ij}=\begin{cases}
&1,\qquad i=n+1-j;\\
&0,\qquad i\neq n+1-j;
\end{cases}
\end{equation*}
the entries of the matrix $B$ can be represented as follows:
$$
b_{ij}=\sum\limits_{k=1}^{n}(J)_{ik}a_{kj}=a_{n+1-i,j}.
$$

Consider the totally nonnegative matrix $B^2$. Its entries have
the form
\begin{equation*}
(B^2)_{ij}=\sum\limits_{k=1}^{n}b_{ik}b_{kj}=\sum\limits_{k=1}^{n}a_{n+1-i,k}a_{n+1-k,j}.
\end{equation*}
From these formul\ae~and from~\eqref{matrix.entries.1} it follows
that all the entries of the matrix $B^2$ above and under the main
diagonal are positive, that is, $(B^2)_{i,i+1}>0$ and
$(B^2)_{i+1,i}>0$, $i=1,2,\ldots,n-1$. By
Theorem~\ref{teorem.oscill.matrix}, $B^2$ is an oscillatory
matrix. According to the definition of oscillatory matrices, a
certain power of $B^2$ is strictly totally positive. Thus, we
proved that a certain power of the matrix $B$ is strictly sign
definite, so $B$ is a sign definite matrix of class $n^{+}$ with
the signature sequence~\eqref{SI_matrix.eps} according to
Theorem~\ref{Thm.SI_spectra.matrix}. By
Theorem~\ref{teorem.signreg.matrix}, all eigenvalues of the matrix
$B$ are nonzero real and simple. Moreover, if we enumerate the
eigenvalues in order of decreasing absolute values as
in~\eqref{SI_matrix.modules}, then
from~\eqref{SI_matrix.signs.eigvals} and~\eqref{SI_matrix.eps} we
obtain that the spectrum of $B$ is of the
form~\eqref{SI_matrix.real.spectra}.

The converse assertion of the theorem follows from
Theorem~\ref{teorem.SI_oscill.spectra.matrix}.
\end{proof}

\begin{remark}
If the matrix $-A$ is totally nonnegative and the
conditions~\eqref{matrix.entries.1} (or the
conditions~\eqref{matrix.entries.2}) hold, then the matrix $B=JA$
(or, respectively, $C=AJ$) has a spectrum of the
form~\eqref{SI_matrix.real.spectra.2}.
\end{remark}
\begin{remark}\label{remark.oscill_signreg.matrix.1}
One can obtain another types of totally nonnegative matrices which
result matrices with self-interlacing spectra after multiplication
by the matrix $J$. To do this we need to change the
conditions~\eqref{matrix.entries.1}--\eqref{matrix.entries.2} by
another ones such that, for instance, the matrix $B^4$ (or $B^6$,
or $B^8$ etc) becomes oscillatory.
\end{remark}

Theorem~\ref{teorem.oscill_signreg.matrix.1} implies the following
corollary.
\begin{corol}\label{corol.oscill_signreg.matrix.1}
A nonsingular matrix $A$ with positive entries is totally
nonnegative if, and only if, the matrix $B=JA$ (or the matrix
$C=AJ$) is sign definite of class~$n^{+}$ with the signature
sequence~\eqref{SI_matrix.eps}. Moreover, the matrix $B$ possesses a
self-interlacing spectrum.
\end{corol}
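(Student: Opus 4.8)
The plan is to recognize the corollary as the special case of Theorem~\ref{teorem.oscill_signreg.matrix.1} in which \emph{every} entry of $A$ is strictly positive, and then to check that under this assumption the extra hypotheses~\eqref{matrix.entries.1}--\eqref{matrix.entries.2} become vacuous. So the whole task reduces to verifying that a matrix with positive entries falls within the scope of the theorem already proved.

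First I would observe that positivity of all entries trivially implies nonnegativity, so $A$ satisfies the standing hypothesis of the theorem. Next, fix $i\in\{1,\ldots,n-1\}$. Since every entry $a_{kl}$ is positive, every product of two entries is positive as well; in particular, taking $r_1=r_2=1$ (indeed any admissible choice works) gives $a_{n-i,1}\cdot a_{n,i}>0$ and $a_{n+1-i,1}\cdot a_{n,i+1}>0$, which is exactly condition~\eqref{matrix.entries.1}. The only thing to confirm is that the indices occurring here lie in $\{1,\ldots,n\}$, and they do: for $i$ in the stated range one has $1\leqslant n-i\leqslant n-1$ and $2\leqslant n+1-i\leqslant n$, while $n+1-r_1=n+1-r_2=n$. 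Thus every hypothesis of Theorem~\ref{teorem.oscill_signreg.matrix.1} holds automatically.

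With the hypotheses in place the conclusion is immediate: the theorem yields that $A$ is totally nonnegative if and only if $B=JA$ is sign definite of class~$n^{+}$ with signature sequence~\eqref{SI_matrix.eps}, and that in this case $B$ has a self-interlacing spectrum of the form~\eqref{SI_matrix.real.spectra}. The assertion about $C=AJ$ follows in the same way, invoking condition~\eqref{matrix.entries.2}, whose products are likewise positive. Since strict positivity renders the sign conditions automatic, I expect no genuine obstacle in this argument; the sole point deserving attention is the index bookkeeping that guarantees the chosen $r_1,r_2$ produce valid matrix entries, which is the verification carried out above.
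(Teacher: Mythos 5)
Your proof is correct and matches the paper's approach: the paper simply states that Corollary~\ref{corol.oscill_signreg.matrix.1} follows from Theorem~\ref{teorem.oscill_signreg.matrix.1}, and you have supplied exactly the routine verification (that strict positivity of all entries makes conditions~\eqref{matrix.entries.1}--\eqref{matrix.entries.2} hold for any choice of $r_1,r_2$) that this implication requires. The index bookkeeping you carry out is accurate.
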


Consider a partial case of conditions~\eqref{matrix.entries.1}.
Suppose that all diagonal entries of the matrix $A$ are positive:
$a_{jj}>0$, $j=1,2,\ldots,n$. It is easy to see that in this case
the conditions~\eqref{matrix.entries.1} hold if $a_{j,j+1}>0$,
$j=1,2,\ldots,n-1$. If all remaining entries of the matrix $A$ are
nonnegative, then by Theorem~\ref{teorem.oscill_signreg.matrix.1}
the matrices $B=JA$ and $C=AJ$ have self-interlacing spectra whenever
$A$ is totally nonnegative.

If all other entries of the matrix $A$ (that is, all entries
except $a_{jj}$ and $a_{j,j+1}$, which are positive) equal zero,
then $A$ is a bidiagonal matrix with positive entries on and under
the main diagonal. Clearly, $A$ is totally nonnegative. Then by
Theorem~\ref{teorem.oscill_signreg.matrix.1} the matrix $B=JA$ possesses
a self-interlacing spectrum. Note that the matrix $B$ in this case
is anti-bidiagonal with positive entries, that is, it has the form
\begin{equation}\label{Matrix.antybidiag}
B=
\begin{pmatrix}
    0 & 0 &0&&\dots&&  0   & 0 & b_n \\
    0 &0&0&&\dots&&  0 & b_{n-2} & b_{n-1} \\
     0  & 0&0&&\dots&& b_{n-4}& b_{n-3}& 0 \\
     & & &  & &\cdot & & & \\
    \vdots&\vdots&\vdots&&a&&\vdots&\vdots&\vdots\\
    & & & \cdot & & & & & \\
     0& 0 &c_{n-4}&&\dots&&0&0&0\\
      0 &c_{n-2}&c_{n-3}&&\dots&&0&0&0\\
     c_n&c_{n-1}&  0  &&\dots&&0&0&0\\
\end{pmatrix},\qquad b_j>0,\,\,c_j>0,\,\,a>0.
\end{equation}
where all the entries $b_j$,
$j=2,3,\ldots,n$, lie above the main diagonal, all the entries
$c_j$, $j=2,3,\ldots,n$, lie under the main diagonal and the
only entry on the main diagonal is $a$.

Thus we proved the following fact.
\begin{theorem}
Any anti-bidiagonal matrix with positive entries as in~\eqref{Matrix.antybidiag} possesses
a self-interlacing spectrum.
\end{theorem}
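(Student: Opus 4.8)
The plan is to recognize the matrix $B$ of \eqref{Matrix.antybidiag} as $JA$ for a bidiagonal matrix $A$ and then apply Theorem~\ref{teorem.oscill_signreg.matrix.1} verbatim. Since $J^2$ is the identity, I would set $A=JB$; because left multiplication by $J$ reverses the order of the rows, reversing the rows of the anti-bidiagonal $B$ turns its two populated anti-diagonals (those with $i+j=n+1$ and $i+j=n+2$) into the main diagonal and the first superdiagonal, respectively. Reading off the entries via $a_{ij}=(JB)_{ij}=(B)_{n+1-i,j}$, row $1$ of $B$ (only $b_n$ at column $n$) becomes row $n$ of $A$, giving $a_{nn}=b_n$; row $2$ of $B$ becomes row $n-1$, giving $a_{n-1,n-1}=b_{n-2}$ and $a_{n-1,n}=b_{n-1}$; and so on, while the bottom rows of $B$ carrying the $c_j$'s become the top rows of $A$, e.g.\ $a_{11}=c_n$ and $a_{12}=c_{n-1}$. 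The upshot is that $A$ is an upper bidiagonal matrix with $a_{jj}>0$ for all $j$, $a_{j,j+1}>0$ for $j=1,\dots,n-1$, and all other entries zero.

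Next I would check the hypotheses of Theorem~\ref{teorem.oscill_signreg.matrix.1} for this $A$. The matrix is nonsingular because $\det A=\prod_{j=1}^{n}a_{jj}>0$, and it is totally nonnegative since a bidiagonal matrix with nonnegative entries is totally nonnegative (as already observed in the discussion preceding the theorem). The only genuine computation is the verification of \eqref{matrix.entries.1}. For each $i=1,\dots,n-1$ I would take $r_1=r_2=n+1-i$, which lies in $\{1,\dots,n\}$ for all such $i$. Then in the first product $a_{n-i,r_1}=a_{n-i,n-i+1}$ is a positive superdiagonal entry and $a_{n+1-r_1,i}=a_{i,i}$ is a positive diagonal entry, while in the second product $a_{n+1-i,r_2}=a_{n+1-i,n+1-i}$ is a positive diagonal entry and $a_{n+1-r_2,i+1}=a_{i,i+1}$ is a positive superdiagonal entry; hence both inequalities in \eqref{matrix.entries.1} hold.

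With nonsingularity, total nonnegativity, and \eqref{matrix.entries.1} all in place, Theorem~\ref{teorem.oscill_signreg.matrix.1} applies directly and gives that $B=JA$ is sign definite of class~$n^{+}$ with the signature sequence \eqref{SI_matrix.eps} and, in particular, possesses a self-interlacing spectrum in the sense of \eqref{SI_matrix.real.spectra}, which is precisely the assertion. I expect no analytic difficulty whatsoever, since all the spectral content is imported from Theorem~\ref{teorem.oscill_signreg.matrix.1} (which itself rests on $B^2$ being oscillatory). The only place that requires care, and hence the main obstacle, is purely combinatorial: correctly tracking the index reversal $i\mapsto n+1-i$ to see which $b_j$ and $c_j$ land on the diagonal versus the superdiagonal of $A$, and confirming that the single choice $r_1=r_2=n+1-i$ satisfies both products in \eqref{matrix.entries.1} for every $i$, including the endpoints $i=1$ and $i=n-1$.
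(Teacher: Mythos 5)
Your proof is correct and follows essentially the same route as the paper: the paper also realizes the anti-bidiagonal matrix as $B=JA$ for an upper bidiagonal $A$ with positive diagonal and superdiagonal, notes that such an $A$ is nonsingular and totally nonnegative, verifies \eqref{matrix.entries.1} via the choice $r_1=r_2=n+1-i$ (as a ``partial case'' where $a_{jj}>0$ and $a_{j,j+1}>0$), and invokes Theorem~\ref{teorem.oscill_signreg.matrix.1}. Your explicit index bookkeeping for $A=JB$ and the endpoint checks are just a more careful writing of the same argument.
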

In~\cite{H4} the same theorem was established under the additional assumption that $b_j=c_j$, $j=2,\ldots,n$.
In the same manner as in the work~\cite{H4}, it also can be shown that the spectrum of the
matrix~\eqref{Matrix.antybidiag} coincides with the spectrum of
the following tridiagonal matrix
\begin{equation*}
T=
\begin{pmatrix}
    a_1 & b_2 &  0 &\dots&   0   & 0 \\
    c_2 & 0 &b_3 &\dots&   0   & 0 \\
     0  &c_3 & 0 &\dots&   0   & 0 \\
    \vdots&\vdots&\vdots&\ddots&\vdots&\vdots\\
     0  &  0  &  0  &\dots&0&b_n\\
     0  &  0  &  0  &\dots&c_n&0\\
\end{pmatrix}.
\end{equation*}
It is well-known~\cite{KreinGantmaher} that the spectrum of this
matrix does not depend on the entries $b_j$ and $c_j$ separately.
It depends on products $b_jc_j$, $j=2,3,\ldots,n$. So in order for
the matrices~\eqref{Matrix.antybidiag} and $T$ to have a
self-interlacing spectra, it is sufficient that the inequalities
$a_1>0$ and $b_jc_j>0$, $j=2,3,\ldots,n$, hold. However, the inverse 
spectral problem cannot be solved here uniquely unless $b_j=c_j$, $j=2,\ldots,n$ (the case of the work~\cite{H4}).

Finally, consider a tridiagonal matrix
\begin{equation}\label{M_J}
M_{J}=
\begin{pmatrix}
    a_1 & b_1 &  0 &\dots&   0   & 0 \\
    c_1 & a_2 &b_2 &\dots&   0   & 0 \\
     0  &c_2 & a_3 &\dots&   0   & 0 \\
    \vdots&\vdots&\vdots&\ddots&\vdots&\vdots\\
     0  &  0  &  0  &\dots&a_{n-1}&b_{n-1}\\
     0  &  0  &  0  &\dots&c_{n-1}&a_n\\
\end{pmatrix},
\end{equation}
where $a_k,b_k,c_k\in\mathbb{R}$ and $c_kb_k\neq0$.
In~\cite{KreinGantmaher}, there was proved the following fact.
\begin{theorem}\label{teorem.SI_Matrix.Jacobi}
Let the matrix $M_{J}$ defined in~\eqref{M_J} be nonnegative. Then $M_J$ is oscillatory if, and only if, all the entries
$b_k$ and $c_k$ are positive and all the leading principal minors
of $M_{J}$ are also positive:
\begin{equation}\label{Jacobi.ineq}
\begin{vmatrix}
    a_1 & b_1 &  0 &\dots&   0   & 0 \\
    c_1 & a_2 &b_2 &\dots&   0   & 0 \\
     0  &c_2 & a_3 &\dots&   0   & 0 \\
    \vdots&\vdots&\vdots&\ddots&\vdots&\vdots\\
     0  &  0  &  0  &\dots&a_{k-1}&b_{k-1}\\
     0  &  0  &  0  &\dots&c_{k-1}&a_k\\
\end{vmatrix}>0,\qquad k=1,\ldots,n.
\end{equation}
\end{theorem}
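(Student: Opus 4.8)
The plan is to reduce the statement to Theorem~\ref{teorem.oscill.matrix}, the only substantive point being the passage from nonsingularity to positivity of \emph{all} leading principal minors. Write $D_k$ for the $k$-th leading principal minor of $M_J$, so that $D_0=1$, $D_1=a_1$, and the tridiagonal structure supplies the three-term recurrence
\[
D_k=a_kD_{k-1}-b_{k-1}c_{k-1}D_{k-2},\qquad k=2,\ldots,n,
\]
in which the off-diagonal entries enter only through the products $b_{k-1}c_{k-1}$. Since $M_J$ is (totally) nonnegative, every minor is nonnegative; in particular each entry is nonnegative, so $b_k,c_k\geqslant 0$, and each $D_k\geqslant 0$. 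Because $M_J$ is tridiagonal, $a_{j,j+1}=b_j$ and $a_{j+1,j}=c_j$, so the off-diagonal hypotheses of Theorem~\ref{teorem.oscill.matrix} read exactly $b_j>0$ and $c_j>0$; under the nonnegativity of the entries this is equivalent to $b_jc_j>0$.

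The ``if'' direction I would dispatch directly: assuming all $b_k,c_k>0$ and all $D_k>0$, we have $\det M_J=D_n>0$, so $M_J$ is nonsingular, and its super- and subdiagonal entries are positive, whence Theorem~\ref{teorem.oscill.matrix} yields that $M_J$ is oscillatory. The ``only if'' direction carries the real content. If $M_J$ is oscillatory, then Theorem~\ref{teorem.oscill.matrix} immediately gives $b_k,c_k>0$ together with nonsingularity, so that $D_n=\det M_J\neq 0$ and, by nonnegativity, $D_n>0$.

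The main obstacle is to upgrade this single inequality to $D_k>0$ for \emph{every} $k$, since nonsingularity a priori controls only the full determinant. I would settle this by a minimal-counterexample argument resting on the recurrence. Suppose some $D_k$ vanished, and let $m$ be the smallest such index; then $1\leqslant m\leqslant n-1$ (as $D_0=1$ and $D_n>0$), and by minimality together with nonnegativity $D_{m-1}>0$ (taking $D_0=1$). Evaluating the recurrence at index $m+1$ gives
\[
D_{m+1}=a_{m+1}D_m-b_mc_mD_{m-1}=-b_mc_mD_{m-1}<0,
\]
since $b_mc_m>0$ and $D_{m-1}>0$, which contradicts $D_{m+1}\geqslant 0$. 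Hence no $D_k$ vanishes and all leading principal minors are positive, completing the equivalence. The crux throughout is the sandwiching furnished by the three-term recurrence: a vanishing leading minor is forced to be immediately followed by a negative one, in direct conflict with total nonnegativity.
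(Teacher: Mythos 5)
The paper does not actually prove this statement: it is quoted verbatim from Gantmacher--Krein \cite{KreinGantmaher}, so the only comparison available is with the classical argument there. Your ``only if'' direction is correct and is essentially the standard one: oscillatory implies totally nonnegative and nonsingular, Theorem~\ref{teorem.oscill.matrix} gives $b_k,c_k>0$, and your minimal-index argument with the three-term recurrence $D_k=a_kD_{k-1}-b_{k-1}c_{k-1}D_{k-2}$ (which you compute correctly) cleanly upgrades $D_n>0$ and $D_k\geqslant 0$ to $D_k>0$ for all $k$. That part I would accept as written.

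The gap is in the ``if'' direction. Theorem~\ref{teorem.oscill.matrix} takes \emph{total} nonnegativity as a hypothesis, but the hypothesis of the present theorem is only that $M_J$ is (entrywise) nonnegative --- and this weaker reading is the one the paper relies on: in the proof of Theorem~\ref{teorem.anti.tridiag} the author applies this theorem to $M_J=JA_J$ knowing only its entries and its leading principal minors, and then \emph{deduces} total nonnegativity from oscillatority (``the matrix $M_J$ is oscillatory and, in particular, totally nonnegative''). Your parenthetical ``(totally) nonnegative'' quietly assumes away exactly the substantive content of the Gantmacher--Krein criterion: that a tridiagonal matrix with nonnegative entries, positive off-diagonals, and positive leading principal minors is totally nonnegative. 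This does require an argument; the positivity of the $D_k$ controls only one nested chain of minors, not all of them. A short way to close the hole: since all $D_k>0$, Gaussian elimination without pivoting succeeds and yields $M_J=LU$ with $L$ lower bidiagonal (unit diagonal, subdiagonal entries $c_k/u_k\geqslant 0$ where $u_k=D_k/D_{k-1}>0$) and $U$ upper bidiagonal with positive diagonal and nonnegative superdiagonal; bidiagonal matrices with nonnegative entries are totally nonnegative, and the Binet--Cauchy formula then gives total nonnegativity of the product. (Alternatively, one can invoke the factorization of minors of a tridiagonal matrix into consecutive blocks, as Gantmacher--Krein do.) With that lemma inserted, your proof is complete.
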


This theorem together with
Theorem~\ref{teorem.oscill_signreg.matrix.1} implies the following
statement.
\begin{theorem}\label{teorem.anti.tridiag}
The anti-tridiagonal matrix
\begin{equation*}
A_{J}=
\begin{pmatrix}
    0 & 0 &0&\dots&  0   & b_1 & a_1 \\
    0 &0&0&\dots&  b_2 & a_2 & c_1 \\
     0  & 0&0&\dots& a_3& c_2& 0 \\
    \vdots&\vdots&\vdots&\ddots&\vdots&\vdots\\
     0&b_{n-2}&a_{n-2}&\dots&0&0&0\\
     b_{n-1}&a_{n-1}&c_{n-2}&\dots&0&0&0\\
     a_n&c_{n-1}&  0  &\dots&0&0&0\\
\end{pmatrix},
\end{equation*}
where $a_j,b_j,c_j>0$ for $j=1,2,\ldots,n-1$, is sign definite of
class $n^{+}$ and possesses a self-interlacing
spectrum if, and only if, the
following inequalities hold:
\begin{equation}\label{antyJacobi.ineq}
(-1)^{\frac{k(k-1)}2}
\begin{vmatrix}
    0 & 0 &0&\dots&  0   & b_1 & a_1 \\
    0 &0&0&\dots&  b_2 & a_2 & c_1 \\
     0  & 0&0&\dots& a_3& c_2& 0 \\
    \vdots&\vdots&\vdots&\ddots&\vdots&\vdots\\
     0&b_{k-2}&a_{k-2}&\dots&0&0&0\\
     b_{k-1}&a_{k-1}&c_{k-2}&\dots&0&0&0\\
     a_k&c_{k-1}&  0  &\dots&0&0&0\\
\end{vmatrix}>0,\quad k=1,2,\ldots,n.
\end{equation}
\end{theorem}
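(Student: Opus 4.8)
The plan is to recognize the anti-tridiagonal matrix $A_J$ as the image of the tridiagonal matrix $M_J$ from~\eqref{M_J} under right multiplication by the flip matrix $J$, and then to feed $M_J$ into Theorem~\ref{teorem.oscill_signreg.matrix.1} and Theorem~\ref{teorem.SI_Matrix.Jacobi}. A direct comparison of entries shows that $A_J=M_JJ$, i.e. $(A_J)_{ij}=(M_J)_{i,n+1-j}$, so that $A_J$ plays the role of the matrix $C=AJ$ of Theorem~\ref{teorem.oscill_signreg.matrix.1} with $A=M_J$ (and, since $J^2=I$, also $M_J=A_JJ$). Thus the whole statement should reduce to the characterization of oscillatory tridiagonal matrices in Theorem~\ref{teorem.SI_Matrix.Jacobi}, once the minors in~\eqref{antyJacobi.ineq} are translated into the leading principal minors of $M_J$.

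First I would establish the minor identity. The determinant displayed in~\eqref{antyJacobi.ineq} is precisely the minor of $A_J$ formed by the rows $\{1,\dots,k\}$ and the columns $\{n-k+1,\dots,n\}$; indeed, reading those columns from left to right and using $(A_J)_{ij}=(M_J)_{i,n+1-j}$ identifies this block with the leading $k\times k$ block of $M_J$ with its columns written in reverse order. Reversing $k$ columns contributes the sign $(-1)^{k(k-1)/2}$, so this minor equals $(-1)^{k(k-1)/2}D_k$, where $D_k$ denotes the $k$-th leading principal minor of $M_J$. Multiplying by the prefactor $(-1)^{k(k-1)/2}$ in~\eqref{antyJacobi.ineq} and using that $k(k-1)$ is even, I obtain that~\eqref{antyJacobi.ineq} is equivalent to $D_k>0$ for $k=1,\dots,n$, that is, to the inequalities~\eqref{Jacobi.ineq} for $M_J$.

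For the forward implication, assume~\eqref{antyJacobi.ineq}, equivalently $D_k>0$ for all $k$. Since $b_k,c_k>0$ and $M_J$ is nonnegative, Theorem~\ref{teorem.SI_Matrix.Jacobi} yields that $M_J$ is oscillatory; in particular it is nonsingular and totally nonnegative, and the recurrence $D_k=a_kD_{k-1}-b_{k-1}c_{k-1}D_{k-2}$ forces every diagonal entry $a_k$ to be positive as well. With all entries $a_k,b_k,c_k$ positive, condition~\eqref{matrix.entries.2} for $A=M_J$ is verified at once by taking $r_1=r_2=n-i$ for each $i$, since then the two products become $b_i\,a_{n-i}$ and $a_{i+1}\,b_{n-i}$. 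Theorem~\ref{teorem.oscill_signreg.matrix.1} (in its $C=AJ$ form) now applies and shows that $A_J=M_JJ$ is sign definite of class $n^{+}$ with signature sequence~\eqref{SI_matrix.eps} and possesses a self-interlacing spectrum.

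For the converse, suppose $A_J$ is sign definite of class $n^{+}$ and has a self-interlacing spectrum. Then $A_J$ is nonsingular (its eigenvalues satisfy~\eqref{SI_matrix.real.spectra}, hence are nonzero), and, being sign definite of class $n$ with signature~\eqref{SI_matrix.eps}, Theorem~\ref{teorem.SI_oscill.spectra.matrix} applied to $A=M_J$ (for which $AJ=A_J$) gives that $M_J=A_JJ$ is totally nonnegative; it is also nonsingular because $J$ is. As $M_J$ is then a nonsingular totally nonnegative tridiagonal matrix with $b_k,c_k>0$, Theorem~\ref{teorem.oscill.matrix} makes it oscillatory, whence the necessity part of Theorem~\ref{teorem.SI_Matrix.Jacobi} gives $D_k>0$ for all $k$; by the minor identity this is exactly~\eqref{antyJacobi.ineq}. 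The main obstacle is the bookkeeping of the second paragraph, namely correctly matching the indicated block of $A_J$ to the reversed leading block of $M_J$ and tracking the reversal sign $(-1)^{k(k-1)/2}$ so that it cancels the prefactor in~\eqref{antyJacobi.ineq}; the verification of~\eqref{matrix.entries.2} and the appeals to Theorems~\ref{teorem.oscill.matrix} and~\ref{teorem.SI_Matrix.Jacobi} are then routine.
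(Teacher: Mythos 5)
Your argument is correct and follows essentially the same route as the paper's: recognize $A_J$ as the tridiagonal matrix $M_J$ of~\eqref{M_J} composed with the flip $J$, convert the minors in~\eqref{antyJacobi.ineq} into the leading principal minors of $M_J$ via the column-reversal sign $(-1)^{k(k-1)/2}$, and invoke Theorems~\ref{teorem.SI_Matrix.Jacobi} and~\ref{teorem.oscill_signreg.matrix.1} (your identification $A_J=M_JJ$, i.e.\ the $C=AJ$ form with condition~\eqref{matrix.entries.2}, is the literally correct one for the displayed $A_J$, whereas the paper writes $M_J=JA_J$). The only notable divergence is the necessity direction, where you pass through total nonnegativity of $M_J=A_JJ$ (Theorem~\ref{teorem.SI_oscill.spectra.matrix}), oscillation (Theorem~\ref{teorem.oscill.matrix}) and the necessity half of Theorem~\ref{teorem.SI_Matrix.Jacobi} to reach the strict inequalities $D_k>0$, while the paper reads the minor signs directly from Theorem~\ref{teorem.signreg.matrix}; your variant is in fact the more careful one, since sign definiteness by itself only fixes the signs of the \emph{nonzero} minors and would a priori yield only non-strict inequalities.
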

\begin{proof}
If the matrix $A_{J}$ is sign definite of class $n^{+}$ and has a
self-interlacing spectrum of the
form~\eqref{SI_matrix.real.spectra}, then according to
Theorem~\ref{teorem.signreg.matrix}, the signs of its nonzero
minors can be calculated by the formula~\eqref{SI_matrix.eps}.
This implies the inequalities~\eqref{antyJacobi.ineq}.

Conversely, let the inequalities~\eqref{antyJacobi.ineq} hold.
Then we have that the inequalities~\eqref{Jacobi.ineq} hold for
the~matrix $M_J=JA_J$. By Theorem~\ref{teorem.SI_Matrix.Jacobi},
the matrix $M_J$ is oscillatory and, in particular, totally
nonnegative~\cite{KreinGantmaher}. Now notice that $(M_J)_{ii}>0$, $i=1,\ldots,n$, and
$(M_J)_{k,k+1}>0$, $k=1,\ldots,n-1$, so $M_J$ satisfies the
condition~\eqref{matrix.entries.1} of
Theorem~\ref{teorem.oscill_signreg.matrix.1}. Therefore, the
matrix $A_J$ is sign definite of class $n^{+}$ and has a
self-interlacing spectrum of the
form~\eqref{SI_matrix.real.spectra}.
\end{proof}

\section{Conclusion}

Theorem~\ref{teorem.oscill_signreg.matrix.1} and Corollary~\ref{corol.oscill_signreg.matrix.1}
provide a method of constructing matrices with self-interlacing
spectrum from given totally nonnegative and oscillatory matrices. We gave two examples of application
of this method for bidiagonal and tridiagonal matrices. One can use Remark~\ref{remark.oscill_signreg.matrix.1}
to generalize Theorem~\ref{teorem.oscill_signreg.matrix.1} and to apply the generalization for
other types of structured matrices.

%

\subsection*{Acknowledgment}
The author is Shanghai Oriental Scholar whose work was supported by Russian Science Foundation, grant no. 14-11-00022.


\end{document}